\newtheorem{thm}{Theorem}[section]
\newtheorem{lem}[thm]{Lemma}
\def\R {{\mathbb R}}
\newcommand{\psd}[1]{{\R_\succeq^{#1 \vee #1}}}					
\begin{document}

%Author
\title{A note on the Lov\'{a}sz-Schrijver Semidefinite Programming Relaxation for Binary Integer Programs}
\author{Pietro Paparella\thanks{Department of Mathematics, Washington State University, Pullman, WA 99164-3113, USA (\href{mailto: ppaparella@math.wsu.edu}{\texttt{ppaparella@math.wsu.edu}}).}}
\date{}
\maketitle

%Abstract
\begin{abstract}
\noindent
Binary Integer Programming (BIP) problems are of interest due in part to the difficulty they pose and because of their various applications, including those in graph theory, combinatorial optimization and network optimization. In this note, we explicitly state the Lov\'{a}sz-Schrijver Semidefinite Programming (SDP) relaxation (in primal-standard form) for a BIP problem, a relaxation that yields a tighter upper-bound than the canonical Linear Programming relaxation. 

\noindent \\
{\bf Keywords:} Semidefinite programming, Integer programming, relaxation. 
\end{abstract}

%Notation
\section{Notation}
In this note, the following notational conventions are adopted: 
\begin{enumerate}
\item $\R^{1+n}:=\left\{ \begin{bmatrix} x_0 \\ x \end{bmatrix} : x_0 \in \R, x \in \R^n \right\}$ and $\{ e_i \}_{i=0}^n$ denotes the canonical basis. 
\item The space of real $n \times n$ matrices is denoted by $\R^{n \times n}$. The space of real, symmetric $n \times n$ matrices is denoted by $\R^{n \vee n}$. The space of real, symmetric, positive definite (positive semidefinite) $n \times n$ matrices is denoted by $\R_\succ^{n \vee n}$ $\left( \R_\succeq^{n \vee n}\right)$.
\item The $(i,j)$ entry of a matrix $X$ is denoted by $x_{ij}$.
\item Positive definiteness (or positive semidefiniteness) of a  matrix $X$ is denoted by $X \succ 0$ $(X \succeq 0)$. 
\item For $X$, $Y \in \R^{n \times n}$, $X \bullet Y$ denotes the (Frobenius) inner product of the matrices $X$ and $Y$, defined by trace$(X^T Y)$. 
\item For $X \in \R^{n \times n}$, vec$(X)$ denotes the column-wise vectorization of a matrix $X$.
\item For $S \subseteq \R^n$, conv$(S)$ denotes its convex hull.
\item Diag$(x)$ denotes the $n \times n$ diagonal matrix with the vector $x \in \R^n$ on its diagonal. For $X \in \R^{n \times n}$, diag$(X)$ is the column vector of the diagonal entries of $X$. For matrices $A_1 \in \R^{n_1 \times n_1}, \dots, A_r \in \R^{n_1 \times n_1}$, Diag$\left( A_1, \dots, A_r \right) \in \R^d$ denotes the block-diagonal matrix with matrices $A_1, \dots,A_r$ along its block-diagonal, where $d := \sum^r_{k=1} n_k \times \sum^r_{k=1} n_k$.
\end{enumerate}

%Lov\'{a}sz-Schrijver
\section{Lov\'{a}sz-Schrijver Lift-and-Project Method}
% Subsection Lifted Matrix Variable
\subsection{Lifted Matrix Variable}
Consider the binary (or 0-1) integer program 
\begin{align*}
\begin{array}{r r l l l}
\text{maximize}    	& c^T x					\\
\text{subject to} 	& a_i^T x & \leq & b_i & i=1,\dots,m	\\
                    		& x & \in & \{ 0 , 1 \}^n  
\end{array}
\tag{BIP} \label{BIP}
\end{align*}
and its Linear Programming (LP) relaxation
\begin{align*}
\begin{array}{r r l l l}
\text{maximize}    	& c^T x			\\
\text{subject to} 	& a_i^T x & \leq & b_i & i=1,\dots,m	\\
                    		& x & \in &  [0 , 1]^n 
\end{array}
\tag{LPR} \label{LP}
\end{align*}

Let $P$ be the polytope defined by $P := \{ x \in \R^n : Ax \leq b \}$ (assume $Ax \leq b$ includes the $m$ inequalities $a_i^T x \leq b_i$ and the trivial inequalities $0 \leq x \leq 1$). Let $P_I$ denote the convex hull of the 0-1 vectors belonging to $P$. Note that solving $\eqref{LP}$ provides an upper bound on $\eqref{BIP}$, however this solution may not be integral and far from the actual solution. Notice that the polytope $P$, obtained by relaxing the condition $x \in \{ 0, 1 \}^n$ to $x \in [ 0 ,1 ]^n$, is an approximation of $P_I$.

Lov\'{a}sz and Schrijver \cite{Lov-Shri 1991} devised a method that generates nonlinear ``cuts" that better approximate $P_I$ than $P$. Instead of working with $x \in \{0,1\}^n$ in $\eqref{BIP}$, Lov\'{a}sz and Schrijver considered the lifted matrix variable 
\[ X := \begin{bmatrix} 1 \\ x \end{bmatrix} \begin{bmatrix} 1 & x^T \end{bmatrix} = \begin{bmatrix} 1 & x^T \\ x & xx^T \end{bmatrix}. \]
Note that $X$ has the following properties:
\begin{enumerate}
\item $X \in \psd{n+1}$. Indeed, $X$ is a symmetric, rank-one matrix with spectrum $\sigma(X) = \{ 1 + x^Tx, 0 \}$. 
\item $Xe_0=\text{diag}(X)$, i.e., the first column of $X$ equals the diagonal of $X$. This follows from $x_{ii} = x^2_i = x_i$ and $x_i \in \{0,1\}$. Moreover, following the symmetry of $X$, we have $Xe_0 = \text{diag}(X) = X^T e_0$, i.e., the first column, first row and diagonal of $X$ are equal.
\end{enumerate}

% Subsection Nonlinear Cuts
\subsection{Nonlinear Cuts} 
Note that for $i=1,\dots,m$, $j=1,\dots,n$ the inequalities
\begin{align*} 
(b_i - a_i^T x) x_j &\geq 0 \tag{1} \label{1}	\\
(b_i - a_i^T x) (1 - x_j) &\geq 0 \tag{2} \label{2}
\end{align*}
are valid for $x \in P$. 

Let $u_i := \begin{bmatrix} b_i & -a_i^T \end{bmatrix}^T$. One can verify (c.f. \cite{Dash 2001}) that $\eqref{1}$ and $\eqref{2}$ are expressible in terms of $X$ as 
\begin{align*}
u_i e_j^T \bullet X &\geq 0 \tag{3} \label{3}	\\
u_i (e_0 - e_j)^T \bullet X &\geq 0 \tag{4} \label{4}
\end{align*}
Further, the condition $Xe_0 = \text{diag}(X)$ becomes 
\[ e_j (e_0 - e_j)^T \bullet X = 0. \tag{5} \label{5}\] 
Finally, we require $X_{00} = 1$ which is expressible as 
\[ e_0 e_0^T \bullet X = 1. \tag{6} \label{6} \]

Lov\'{a}sz and Schrijver then propose the {\it cones}
\[ M_+(P) := \{ X \in \psd{n+1}: \eqref{3} \text{--} \eqref{6} \} \]
and
\[ N_+(P) : = \{ x \in \R^n : \begin{bmatrix} 1 & x^T \end{bmatrix}^T = \text{diag}(X), X \in M_+ (P) \} \]
and establish
\begin{lem}[See Lemma 1.1 in  \cite{Lov-Shri 1991}]  
$P_I \subseteq N_+(P) \subseteq P$. 
\end{lem}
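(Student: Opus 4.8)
The plan is to prove the two inclusions separately, starting with $N_+(P)\subseteq P$.

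\textbf{Step 1 ($N_+(P)\subseteq P$).} Let $x\in N_+(P)$, witnessed by $X\in M_+(P)$ with $\operatorname{diag}(X)=\begin{bmatrix}1&x^T\end{bmatrix}^T$. First I would unwind \eqref{5} and \eqref{6}: since $e_j(e_0-e_j)^T\bullet X=X_{j0}-X_{jj}$ and $e_0e_0^T\bullet X=X_{00}$, these conditions say $X_{00}=1$ and $X_{j0}=X_{jj}$, i.e. $Xe_0=\operatorname{diag}(X)=\begin{bmatrix}1&x^T\end{bmatrix}^T$, and by symmetry also $X_{0j}=x_j$. Next, for each fixed $i$ I would add \eqref{3} and \eqref{4}: because $u_ie_j^T+u_i(e_0-e_j)^T=u_ie_0^T$, the sum of the two nonnegative left-hand sides equals $u_ie_0^T\bullet X=u_i^T(Xe_0)=u_i^T\begin{bmatrix}1&x^T\end{bmatrix}^T=b_i-a_i^Tx$, whence $a_i^Tx\le b_i$ for $i=1,\dots,m$. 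The box constraints $0\le x\le1$ I would obtain not from the cuts but from positive semidefiniteness: the principal submatrix of $X\succeq0$ on rows and columns $\{0,j\}$ equals $\begin{bmatrix}1&x_j\\x_j&x_j\end{bmatrix}$, and a symmetric $2\times2$ matrix is PSD only if its diagonal and its determinant are nonnegative, forcing $x_j\ge0$ and $x_j-x_j^2\ge0$, i.e. $0\le x_j\le1$. Hence $Ax\le b$ and $x\in P$.

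\textbf{Step 2 ($P_I\subseteq N_+(P)$).} I would first record that $M_+(P)$ is convex, being the intersection of the cone $\psd{n+1}$ with the finitely many affine conditions \eqref{3}--\eqref{6}, and therefore $N_+(P)$ is convex as well, since it is the image of $M_+(P)$ under the linear map $X\mapsto(X_{11},\dots,X_{nn})$. As $P_I$ is the convex hull of the $0$--$1$ vectors of $P$, it then suffices to check that every $\bar x\in\{0,1\}^n\cap P$ lies in $N_+(P)$. Given such an $\bar x$, set $\bar X:=\begin{bmatrix}1\\\bar x\end{bmatrix}\begin{bmatrix}1&\bar x^T\end{bmatrix}$, which is symmetric, rank-one and positive semidefinite as observed above, and verify \eqref{3}--\eqref{6} for $\bar X$: they reduce respectively to $(b_i-a_i^T\bar x)\bar x_j\ge0$ and $(b_i-a_i^T\bar x)(1-\bar x_j)\ge0$ (valid since $\bar x\in P$ gives $a_i^T\bar x\le b_i$ and $0\le\bar x_j\le1$), to $\bar x_j-\bar x_j^2=0$ (valid since $\bar x_j\in\{0,1\}$), and to $\bar X_{00}=1$. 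Since $\operatorname{diag}(\bar X)=\begin{bmatrix}1&\bar x^T\end{bmatrix}^T$, this places $\bar x\in N_+(P)$, and convexity of $N_+(P)$ completes the inclusion.

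\textbf{Expected main obstacle.} There is no deep step here; the work is entirely in the bookkeeping needed to rewrite \eqref{3}--\eqref{6} from Frobenius-product form back into scalar (in)equalities about the entries of $X$, and in spotting the cancellation $e_j^T+(e_0-e_j)^T=e_0^T$ that makes \eqref{3}$+$\eqref{4} collapse to $b_i-a_i^Tx\ge0$. The one point worth flagging is that the bounds $0\le x\le1$ should be extracted from the $2\times2$ positive-semidefinite minors of $X$ rather than from the cut inequalities, since \eqref{3}--\eqref{4} are only imposed for the $m$ original rows $i=1,\dots,m$; if instead one reads the construction as forming \eqref{3}--\eqref{4} for every row of $Ax\le b$, the box bounds follow directly and this point is moot.
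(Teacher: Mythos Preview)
The paper does not supply its own proof of this lemma; it merely states the result with a citation to Lov\'asz and Schrijver~\cite{Lov-Shri 1991}. Your argument is correct and is essentially the standard one: for $N_+(P)\subseteq P$ you add \eqref{3} and \eqref{4} and use $Xe_0=\operatorname{diag}(X)$ to recover $b_i-a_i^Tx\ge0$, and for $P_I\subseteq N_+(P)$ you verify the conditions for each $0$--$1$ point of $P$ via the rank-one lift and then invoke convexity of $N_+(P)$.

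Your closing remark about the indexing ambiguity is apt. In the paper, $Ax\le b$ is declared to include the box constraints $0\le x\le1$, yet \eqref{3}--\eqref{4} are written only for $i=1,\dots,m$; in the standard Lov\'asz--Schrijver construction the cuts are generated from \emph{all} rows of $A$, in which case the box bounds fall out of your sum argument directly. Your alternative derivation of $0\le x_j\le1$ from the $2\times2$ principal minor of $X$ on $\{0,j\}$ is a nice way to make the proof robust to either reading. One minor wording point: in Step~2 you describe \eqref{3}--\eqref{6} as ``affine conditions''; since \eqref{3} and \eqref{4} are linear inequalities rather than equalities, ``linear (in)equalities'' or ``halfspaces and hyperplanes'' would be more precise, though the convexity conclusion is of course unaffected.
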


Following \hyperref[Lemma 2.1]{Lemma 2.1}, solving $\max \{ c^T x : x \in N_+ (P) \}$ produces a tighter upper-bound for $\eqref{BIP}$ than $\eqref{LP}$. 

%SDP
\section{Semidefinite Programming (SDP)}
An SDP problem in \textit{primal form} is given by
\begin{align*}
\begin{array}{r r l l l}
\text{minimize}    	& C \bullet X					\\
\text{subject to} 	& A_i \bullet X & = & b_i &  i=1,\dots,m 	\\
                    		& X & \succeq & 0  
\end{array}
\tag{SDPP} \label{SDPP}
\end{align*}
where $A_i \in \R^{n \vee n}$, $b_i \in \R^n$, $C \in \R^{n \vee n}$ are the problem data, and $X \in \R_\succeq^{n \vee n}$ is the variable. 

An SDP problem in \textit{dual form} is given by
\begin{align*}
\begin{array}{r r l}
\text{maximize} &  b^T y &		\\
\text{subject to} & \sum_{i=1}^m y_i A_i  & \preceq C 	
\end{array}
\tag{SDPD} \label{SDPD}
\end{align*}
where $y \in \R^m$ is the variable. 

Note that the linear programming problem
\begin{align*}
\begin{array}{r r l l l}
\text{maximize}    	& c^T x				\\
\text{subject to} 	& a_i^T x & = & b_i & i = 1,\dots,m	\\
                    		& x & \geq &  0 
\end{array}
\end{align*}
becomes an SDP problem in primal form by setting $C:= \text{Diag}(c)$, $A_i:= \text{Diag}(a_i)$ and $X:=\text{Diag}(x)$ so that Semidefinite Programming is a generalization of Linear Programming. 

SDP has applications in eigenvalue optimization, combinatorial optimization, and system and control theory; furthermore, there are several approximation methods for solving SDP's. (See \cite{Todd 2001} or \cite{Van-Boyd 1996} for more detailed discussions concerning SDP.) 

%SDP Relaxation
\section{SDP Relaxation} 
Before we state the SDP primal-form problem explicitly, we prove the following lemma.
\begin{lem}\label{Lemma 4.1}
If $A \in \R^{n \times n}$, $X \in \R^{n \vee n}$ and $A':= \frac{1}{2}(A + A^T)$, then $A\bullet X = A' \bullet X$.    
\end{lem}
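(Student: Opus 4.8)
The plan is to work directly from the definition $A \bullet X = \operatorname{trace}(A^T X)$ together with two elementary facts: the bilinearity of the Frobenius product (immediate from linearity of the trace), and the identity $A^T \bullet X = A \bullet X$ whenever $X$ is symmetric. Granting the latter, the result is a one-line computation:
\[
A' \bullet X = \tfrac{1}{2}(A + A^T) \bullet X = \tfrac{1}{2}\left( A \bullet X + A^T \bullet X \right) = \tfrac{1}{2}\left( A \bullet X + A \bullet X \right) = A \bullet X.
\]
So the real content is the symmetric-$X$ identity, and I would isolate and prove that first.

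To establish $A^T \bullet X = A \bullet X$ for $X \in \R^{n \vee n}$, I would chain together the cyclic invariance of the trace and the hypothesis $X = X^T$. Concretely, $A^T \bullet X = \operatorname{trace}\!\big((A^T)^T X\big) = \operatorname{trace}(AX)$; then $\operatorname{trace}(AX) = \operatorname{trace}\!\big((AX)^T\big) = \operatorname{trace}(X^T A^T) = \operatorname{trace}(X A^T) = \operatorname{trace}(A^T X) = A \bullet X$, where the third equality is exactly where symmetry of $X$ enters and the fourth is cyclicity. An alternative, entrywise route is to write $A \bullet X = \sum_{i,j} a_{ij} x_{ij}$, relabel indices in $A^T \bullet X = \sum_{i,j} a_{ji} x_{ij}$, and use $x_{ij} = x_{ji}$; either version is acceptable, and I would probably present the trace version for brevity since the paper has already committed to the $\operatorname{trace}(X^T Y)$ definition.

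I do not anticipate a genuine obstacle here — the statement is a routine lemma whose only subtlety is making sure symmetry of $X$ (not of $A$) is used in the right place. The one point worth stating carefully is that no assumption on $A$ beyond $A \in \R^{n \times n}$ is needed, so $A'$ is simply the symmetric part of $A$ and the lemma says pairing against a symmetric matrix cannot distinguish $A$ from its symmetrization; this is the fact that will later let us replace the (generally nonsymmetric) coefficient matrices $u_i e_j^T$, $u_i(e_0 - e_j)^T$, $e_j(e_0 - e_j)^T$ arising from $\eqref{3}$–$\eqref{5}$ by genuine elements of $\R^{(n+1) \vee (n+1)}$, as required to cast $M_+(P)$ in the primal-standard SDP form $\eqref{SDPP}$.
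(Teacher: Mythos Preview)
Your proof is correct and follows essentially the same approach as the paper: both expand $A'\bullet X$ by linearity of the trace and then collapse the $A^T$-term onto the $A$-term via $\operatorname{tr}(M)=\operatorname{tr}(M^T)$, the symmetry $X=X^T$, and cyclicity. The only difference is organizational---you isolate the identity $A^T\bullet X = A\bullet X$ as a preliminary step, whereas the paper performs the same trace manipulations inline within a single chain of equalities.
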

\begin{proof}
Following properties of the {\it trace} and {\it transpose} operators,
\begin{align*}
A' \bullet X
= \text{tr}\left( \frac{1}{2}(A + A^T)^T X \right) &= \frac{1}{2}\text{tr}(A^T X) + \frac{1}{2}\text{tr}(A X)	\\
&= \frac{1}{2}\text{tr}(A^T X) + \frac{1}{2}\text{tr}(X A^T)	\\
&= \frac{1}{2}\text{tr}(A^T X) + \frac{1}{2}\text{tr}(A^T X) 	\\
&= \text{tr}(A^T X) = A \bullet X. \qedhere
\end{align*}
\end{proof}
Following \hyperref[Lemma 4.1]{Lemma 4.1}, constraints $\eqref{3}$-$\eqref{6}$ can be written in terms of symmetric matrices (a requirement for the canonical primal- and dual-form SDP problems). 

Let $C := e_0 \begin{bmatrix} 0 & c^T  \end{bmatrix}$. Dash (c.f. \cite{Dash 2001}) demonstrated that solving $\max \{ c^T x : x \in N_+ (P) \}$ is equivalent to solving the SDP (in non-canonical form)
\begin{align*}
\begin{array}{r r l l l}
\text{maximize} 	& C \bullet X   													\\
\text{subject to} 	& \frac{1}{2} \left[ u_i e_j^T + \left( u_i e_j^T \right)^T \right] \bullet X & \geq & 0			\\
                  		& \frac{1}{2} \left[ u_i (e_0 - e_j)^T + \left( u_i (e_0 - e_j)^T \right)^T \right] \bullet X & \geq & 0	\\
                 	 	& \frac{1}{2} \left[ e_j (e_0 - e_j)^T + \left( e_j (e_0 - e_j)^T \right)^T \right] \bullet X & = & 0	\\
                 	 	& e_0 e_0^T \bullet X & = & 1										\\
                  		&  X & \succeq  & 0.											\\
                  		& i & = & 1,\dots,m 											\\
                 		& j & = & 1,\dots,n 											
\end{array}
\end{align*}
which, after introducing $2mn$ {\it surplus-variables}, becomes 
\begin{align*}
\begin{array}{r r l l l l}
\text{maximize} & C \bullet X   															\\
\text{subject to} & \frac{1}{2} \left[ u_i e_j^T + \left( u_i e_j^T \right)^T \right] \bullet X & - & s_{ij} & = 0			\\
                  & \frac{1}{2} \left[ u_i (e_0 - e_j)^T + \left( u_i (e_0 - e_j)^T \right)^T \right] \bullet X & - & \bar{s}_{ij} & = 0	\\
                  & \frac{1}{2} \left[ e_j (e_0 - e_j)^T + \left( e_j (e_0 - e_j)^T \right)^T \right] \bullet X & & &= 0			\\
                  & e_0 e_0^T \bullet X & & & = 1												\\
                  &  X & & & \succeq  0													\\
                  & & & i & = & 1,\dots,m 													\\
                  & & & j & = & 1,\dots,n. 				
\end{array}
\end{align*}
For $i=1,\dots,m$, $j=1,\dots,n$, define
\begin{enumerate}
\item $\bar{n} := 2mn + n +1$

\item$S:=[s_{ij}] \in \R^{m \times n}$ 

\item $\bar{S}:=[\bar{s}_{ij}] \in \R^{m \times n}$ 

\item $\bar{C} := \begin{bmatrix} -C & 0 \\ 0 & 0 \end{bmatrix} \in \R^{\bar{n} \times \bar{n}}$ 

\item $\bar{X} := \begin{bmatrix} X \\ & \text{Diag(vec($S$))} \\ & & \text{Diag(vec($\bar{S}$))} \end{bmatrix}  \in \R^{\bar{n} \times \bar{n}}$

\item $A_{ij}:= \text{Diag} \left( \frac{1}{2} \left[ u_i e_j^T + \left( u_i e_j^T \right)^T \right], 0, \dots,\underbrace{-1}_{n+1+m(j-1)+ i},0, \dots, 0 \right)  \in \R^{\bar{n} \times \bar{n}}$.

\item $A_{ij}:= \text{Diag} \left( \frac{1}{2} \left[ u_i (e_0 - e_j)^T + \left( u_i (e_0 - e_j)^T \right)^T \right], 0, \dots,\underbrace{-1}_{n+1+mn+m(j-1)+ i},0, \dots, 0 \right)  \in \R^{\bar{n} \times \bar{n}}$.

\item $\tilde{A}_{ij}:= \text{Diag} \left( \frac{1}{2} \left[ e_j (e_0 - e_j)^T + \left( e_j (e_0 - e_j)^T \right)^T \right], 0_{2mn \times 2mn} \right) \in \R^{\bar{n} \times \bar{n}}$

\item $A:= \text{Diag} \left( e_0 e_0^T, 0_{2mn \times 2mn} \right) \in \R^{\bar{n} \times \bar{n}}$
\end{enumerate}
so that the primal-form SDP relaxation of $\eqref{BIP}$ is
\begin{align*}
\begin{array}{r r r l}
\text{minimize}   	& \bar{C} \bullet \bar{X}			\\
\text{subject to} 	& A_{ij} \bullet \bar{X} & = & 0 		\\
                            	& \bar{A}_{ij}\bullet \bar{X} & = & 0 	\\
                            	& \tilde{A}_{ij}\bullet \bar{X} & = & 0 	\\
                            	& A \bullet \bar{X} & = & 1		\\
		    	& \bar{X} & \succeq & 0			\\
		    	& i & = & 1,\dots,m 			\\
		    	& j & = & 1,\dots, n.
\end{array} \label{1}
\end{align*}

\newpage
%Bibliography

\end{document}